\def\BibTeX{{\rm B\kern-.05em{\sc i\kern-.025em b}\kern-.08em
    T\kern-.1667em\lower.7ex\hbox{E}\kern-.125emX}}
\DeclareMathOperator*{\argmax}{argmax}
\newcommand{\Matlab}{\textsc{Matlab }}
\newcommand{\mlin}[1]{\mbox{\tt{#1}}}
\DeclareMathOperator{\rank}{rank}
\renewcommand{\vec}{\mathrm{vec}}
\let\tempone\itemize
\let\temptwo\enditemize
\renewenvironment{itemize}{\tempone\addtolength{\itemsep}{0.5\baselineskip}}{\temptwo}
\newtheorem{theorem}{Theorem}
\newtheorem{lemma}[theorem]{Lemma}
\newtheorem{proposition}[theorem]{Proposition}
\newcommand{\commC}[1]{{ #1}}
\newcommand{\Stiefel}{\mathrm{St}}
\begin{document}

\title{Rank Estimation for Third-Order Tensor Completion in the Tensor-Train Format
}

\author{\IEEEauthorblockN{Charlotte Vermeylen}
\IEEEauthorblockA{\textit{Dept. of Computer Science} \\
\textit{KU Leuven}\\
Heverlee, Belgium \\
charlotte.vermeylen@kuleuven.be}
\and
\IEEEauthorblockN{Guillaume Olikier}
\IEEEauthorblockA{\textit{ICTEAM Institute} \\
\textit{UCLouvain}\\
Louvain-la-Neuve, Belgium \\
guillaume.olikier@uclouvain.be}
\and
\IEEEauthorblockN{P.-A. Absil}
\IEEEauthorblockA{\textit{ICTEAM Institute} \\
\textit{UCLouvain}\\
Louvain-la-Neuve, Belgium \\
pa.absil@uclouvain.be}
\and
\IEEEauthorblockN{Marc Van Barel}
\IEEEauthorblockA{\textit{Dept. of Computer Science} \\
\textit{KU Leuven}\\
Heverlee, Belgium \\
marc.vanbarel@kuleuven.be}
}

\maketitle

\begin{abstract}
We propose a numerical method to obtain an adequate value for the upper bound on the rank for the tensor completion problem on the variety of third-order tensors of bounded tensor-train rank. The method is inspired by the parametrization of the tangent cone derived by Kutschan (2018). A proof of the adequacy of the upper bound for a related low-rank tensor approximation problem is given and an estimated rank is defined to extend the result to the low-rank tensor completion problem. Some experiments on synthetic data illustrate the approach and show that the method is very robust, e.g., to noise on the data.  
\end{abstract}

\begin{IEEEkeywords}
tensor-train, tensor completion, rank estimation, tangent cone
\end{IEEEkeywords}

\section{Introduction}
We consider the \emph{low-rank tensor completion problem (LRTCP)} formulated as a least squares optimization problem on the algebraic variety $\mathbb{R}_{\le (k_1, k_2)}^{n_1 \times n_2 \times n_3}$ \cite[Definition 1.4]
{kutschan2018} of $n_1 \times n_2 \times n_3$ real third-order tensors of tensor-train (TT) rank at most $(k_1,k_2)$: 
\begin{equation} \label{eq:min_completion_TT}
\min_{X \in \mathbb{R}_{\le (k_1, k_2)}^{n_1 \times n_2 \times n_3}} \underbrace{\frac{1}{2} \lVert X_{\Omega} - A_{\Omega} \rVert^2}_{=:f_\Omega(X)},
\end{equation}
where $A \in \mathbb{R}^{n_1 \times n_2 \times n_3}$, $\Omega \subseteq \lbrace 1, \dots, n_1 \rbrace \times \lbrace 1, \dots, n_2 \rbrace \times \lbrace 1, \dots, n_3 \rbrace$ is called the sampling set,
\begin{equation*} 
Z_{\Omega}\left(i_1,i_2,i_3\right) := \begin{cases}
Z\left(i_1,i_2,i_3 \right)  & \text{if }\left(i_1,i_2,i_3 \right) \in \Omega, \\
0 & \text{otherwise}, \\
\end{cases}
\end{equation*}
for all $Z \in \mathbb{R}^{n_1 \times n_2 \times n_3}$, and the norm is induced by the inner product
\begin{equation} \label{eq:inner_product}
\langle Y ,X \rangle = \langle \vec(Y) ,\vec(X) \rangle, \quad \forall X,Y \in \mathbb{R}^{n_1 \times n_2 \times n_3}.
\end{equation}

A \emph{tensor-train decomposition} (TTD) of a third-order tensor $X \in \mathbb{R}^{n_1 \times n_2 \times n_3}$ is a factorization $X = X_1 \cdot X_2 \cdot X_3$, where $X_1 \in \mathbb{R}^{n_1 \times r_1}$, $X_2 \in \mathbb{R}^{r_1 \times n_2 \times r_2}$, and $X_3 \in \mathbb{R}^{r_2 \times n_3}$ \cite{Oseledets2011}. The `$\cdot$' indicates the contraction between a matrix and a tensor. They interact with the left and right unfolding of $X_2$, 
\begin{equation*}
\begin{split}
X_2^\mathrm{L} &:= \left[ X_2 \right]^{r_1 \times n_2 r_2}:= \mathrm{reshape}\left( X_2, r_1 \times n_2 r_2 \right), \\
X_2^\mathrm{R} &:= \left[ X_2 \right]^{r_1 n_2 \times r_2} := \mathrm{reshape}\left( X_2, r_1 n_2 \times r_2 \right),
\end{split}
\end{equation*}
in the following way:
\begin{equation*} 
\begin{split}
X_1 \cdot X_2 = \left[ X_1 X_2^\mathrm{L} \right]^{n_1 \times n_2 \times r_2},  X_2 \cdot X_3= \left[ X_2^\mathrm{R} X_3 \right]^{r_1 \times n_2 \times n_3}.\\
\end{split}
\end{equation*}
The minimal $r_1$ and $r_2$ for which a TTD of $X$ exists, is called the \emph{TT-rank} of $X$. For second-order tensors (matrices), the {TT-rank} reduces to the usual matrix rank, and since no other definition of tensor rank is used in this paper, it is simply denoted by $\rank X$ and can be determined as
\begin{equation} \label{eq:def_rank_TT}
\rank X = \left (\rank X^\mathrm{L}, \rank X^\mathrm{R} \right)=: (r_1,r_2).
\end{equation}
The minimal rank decomposition can be obtained by computing successive singular value decompositions (SVDs) of the unfoldings \cite[Algorithm 1]{Oseledets2011}. 

The low-rank variety can then be defined as
\begin{equation} \label{eq:variety}
\mathbb{R}_{\le (k_1, k_2)}^{n_1 \times n_2 \times n_3} := \{X \in \mathbb{R}^{n_1 \times n_2 \times n_3} \mid \rank X \le (k_1, k_2)\}
\end{equation}
and the fixed-rank smooth manifold \cite{Holtz_manifolds_2012} as 
\begin{equation} \label{eq:fixed_rank_manifold}
\mathbb{R}_{(r_1, r_2)}^{n_1 \times n_2 \times n_3} := \{X \in \mathbb{R}^{n_1 \times n_2 \times n_3} \mid \rank X = (r_1, r_2)\}.
\end{equation}

In practical LRTCPs, the rank of $A$ is not known or $A$ has full rank due to noise. This is notably the case for movie rating recommendation systems \cite{Movielens_summary}, where, e.g., the ratings of different users over time (or any other variable of interest) form samples of a large third-order tensor. Evaluating a solution to \eqref{eq:min_completion_TT} in elements outside the set $\Omega$ allows us to recommend movies with a high estimated rating. Note that evaluating one element of a third-order TTD corresponds to performing a vector-matrix-vector multiplication and can be done efficiently in $\mathcal{O}\left( r_1r_2\right)$ operations. 

When $k_1$ and $k_2$ are set too high however, the complexity of an algorithm to solve \eqref{eq:min_completion_TT} is unnecessarily high and furthermore overfitting can occur, i.e., $X$ approximates $A_\Omega$ well but not the full tensor $A$. To detect overfitting, usually a test data set $\Gamma$ is used \cite{Steinl_high_dim_TT_compl_2016}. When the error on this test set increases during optimization while the error of \eqref{eq:min_completion_TT} decreases overfitting has occurred and the algorithm should be stopped or the rank decreased. On the other hand, when $k_1$ or $k_2$ are set too low, the search space may not contain a sufficiently good approximation of $A$. It is thus important to choose adequate values for $k_1$ and $k_2$. 

Intuitively the smaller $\lvert \Omega \rvert$, the more difficult it is to recover $A$ from $A_{\Omega}$ by solving \eqref{eq:min_completion_TT}. However, the minimal number of samples needed is not known \cite{budzinskiy2021tensor}.

In this work, a method to estimate the rank of $A$ from $A_\Omega$ is proposed. When $A$ is not exactly low rank, a good value for a low-rank approximation is obtained. This method can then be used, e.g., in a rank-adaptive optimization algorithm to solve \eqref{eq:min_completion_TT}. 

\commC{The paper is organized as follows. First, in \cref{sec:TT}, the preliminaries for \cref{sec:tangent_cone} and \cref{sec:rank_est} are given. This includes some basic facts concerning orthogonal projections onto vector spaces and arithmetic rules for TTDs. For a more extensive overview of properties of the TTD, we refer to the original paper \cite{Oseledets2011} and the notation introduced in \cite{LR_tensor_methods_Steinlechner_2014}, which is also used in \cite{Steinl_high_dim_TT_compl_2016,Steinlechner_thesis2016}. In \cref{sec:low_rank_approx}, the auxiliary low-rank tensor approximation problem (LRTAP) is defined. 
In \cref{sec:tangent_cone}, the parametrization of the tangent cone to the low-rank variety is given \cite{kutschan2018}. New orthogonality conditions are derived to ensure that in \Cref{prop:rank_est_best_low_rank_approx} no matrix inverse is needed, which improves the stability of the proposed method, and makes the proofs in the rest of the paper easier. In \cref{sec:rank_est}, the main proposition is derived, and an estimated rank is defined to extend our result to the LRTCP. Lastly, in \cref{sec:experiments}, some experiments illustrate the use and advantages of the proposed rank estimation method for the LRTCP.}

\section{Preliminaries} \label{sec:TT}
A TTD is not unique.
Orthogonality conditions can be enforced, which can improve the stability of algorithms working with TTDs. Those used in this paper are introduced in \cref{sec:tangent_cone}.

Given $n, p \in \mathbb{N}$ with $n \ge p$, we let $\Stiefel(p, n) := \{U \in \mathbb{R}^{n \times p} \mid U^\top U = I_p\}$ denote the Stiefel manifold. For every $U \in \Stiefel(p, n)$, we let $P_U := UU^\top$ and $P_U^\perp := I_n-P_U$ denote the orthogonal projections onto the range of $U$ and its orthogonal complement, respectively.
A tensor is said to be \emph{left-orthogonal} if $n_1 \leq n_2n_3$ and $\left( X^\mathrm{L} \right)^\top \in \Stiefel(n_1,n_2n_3)$, and \emph{right-orthogonal} if $n_3 \leq n_1 n_2$ and $ X^\mathrm{R} \in \Stiefel(n_3,n_1n_2)$.

The following properties and arithmetic rules are used frequently in the rest of the paper.
\begin{itemize}
    \item For all matrices $Y$ and $Z$, it holds that $Y \cdot (X_1 \cdot {X}_2 \cdot X_3) \cdot Z = YX_1 \cdot {X}_2 \cdot X_3 Z$. 
    \item The left and right unfoldings of $X=X_1 \cdot X_2 \cdot X_3$ can be rewritten as:
    \begin{equation} \label{eq:LR_TTD}
\begin{split}
&X^\mathrm{L} = X_1 \left(  X_2 \cdot X_3 \right)^\mathrm{L} = X_1 X_2^\mathrm{L} \left(X_3 \otimes I_{n_2} \right),\\
&X^\mathrm{R} =\left( X_1 \cdot X_2  \right)^\mathrm{R} X_3 = \left( I_{n_2} \otimes X_1 \right)  X_2^\mathrm{R} X_3,
\end{split}
\end{equation}
where `$\otimes$' denotes the Kronecker product. 
\item From \eqref{eq:def_rank_TT} and \eqref{eq:LR_TTD} it can be deduced that
\begin{equation} \label{eq:rank_factors_TTD1}
\begin{split}
\rank (X_1) &= \rank \left( X_2^\mathrm{L} \left(X_3 \otimes I_{n_2} \right) \right) = r_1, \\
\rank (X_3) &= \rank \left(\left( I_{n_2} \otimes X_1 \right)  X_2^\mathrm{R} \right) = r_2, \\
\end{split}
\end{equation}
and because the ranks of $I_{n_2} \otimes X_1$ and $X_3 \otimes I_{n_2}$ are $n_2 r_1$ and $r_2 n_2$, respectively, \eqref{eq:rank_factors_TTD1} can be simplified to
$\rank \left( X_2^\mathrm{L} \right) = r_1$ and $\rank \left( X_2^\mathrm{R} \right) = r_2.$
\item Orthogonality between TTDs is exploited frequently in the parametrization of the tangent cone in \cref{sec:tangent_cone} and the proofs in \cref{sec:rank_est}. If $Y = Y_1 \cdot Y_2 \cdot Y_3$ and $Z = Z_1 \cdot Z_2 \cdot Z_3$, then by using \eqref{eq:LR_TTD}, the inner product $\langle Y,Z \rangle$ is zero if at least one of the following equalities holds:
\begin{align} \label{eq:inner_prod_zero}
Y_1^\top Z_1 &= 0, && &\left( Y_2 \cdot Y_3 \right)^\mathrm{L} \big( \left( Z_2 \cdot Z_3 \right)^\mathrm{L} \big)^\top &= 0, & \nonumber \\
Y_3 Z_3^\top &= 0, && &\big( \left( Y_1 \cdot Y_2 \right)^\mathrm{R} \big)^\top \left( Z_1 \cdot Z_2 \right)^\mathrm{R}  &= 0. &  
\end{align}
\end{itemize} 
\section{Low-Rank Tensor Approximation} \label{sec:low_rank_approx}
The \emph{low-rank tensor approximation problem} (LRTAP) is defined as:
\begin{equation} \label{eq:low-rank_approx}
\min_{X \in \mathbb{R}_{\le (k_1, k_2)}^{n_1 \times n_2  \times n_3}} \underbrace{\frac{1}{2} \lVert X - A  \rVert^2}_{=: f(X)}.
\end{equation}
This problem is related to the LRTCP \eqref{eq:min_completion_TT} because $f_\Omega(X) = f(X)$ for $\Omega = \lbrace 1, \dots, n_1 \rbrace \times \lbrace 1, \dots, n_2 \rbrace \times \lbrace 1, \dots, n_3 \rbrace$. Remark that, as for \eqref{eq:min_completion_TT}, a global minimizer is, in general, not unique because $\mathbb{R}_{\le (k_1, k_2)}^{n_1 \times n_2  \times n_3}$ is nonconvex and NP-hard to obtain \cite{hillar2013most}. This problem is used in \cref{sec:rank_est}.

\section{Tangent Cone} \label{sec:tangent_cone}
The set of all tangent vectors to $\mathbb{R}_{\le (k_1, k_2)}^{n_1 \times n_2 \times n_3}$ at $X = X_1'\cdot X_2' \cdot X_3 \in \mathbb{R}_{(r_1, r_2)}^{n_1 \times n_2 \times n_3}$, where $X_1' \in \Stiefel(r_1,n_1)$, ${X_2'}^\mathrm{R} \in \Stiefel(r_2,r_1n_2)$, and $k_1 \ge r_1$, $k_2 \ge r_2$, is a closed cone called the tangent cone to $\mathbb{R}_{\le (k_1, k_2)}^{n_1 \times n_2 \times n_3}$ at $X$ and denoted by $T_X \mathbb{R}_{\le (k_1, k_2)}^{n_1 \times n_2 \times n_3}$. By \cite[Theorem 2.6]{kutschan2018}, $T_X\mathbb{R}_{\le (k_1, k_2)}^{n_1 \times n_2 \times n_3}$ is the set of all $G \in \mathbb{R}^{n_1 \times n_2 \times n_3}$ that can be decomposed as
\begin{equation*} %\label{eq:tangent_cone_TT_3D}
\begin{split}
G &= \begin{bmatrix} X_1' & U_1 & W_1 \end{bmatrix} \cdot
\begin{bmatrix}
X_2' & U_2 & W_2 \\ 
0 & Z_2 & V_2 \\
0 & 0 & X_2' \\
\end{bmatrix} \cdot \begin{bmatrix} W_3 \\ V_3 \\ X_3 \end{bmatrix}, \\
\end{split}
\end{equation*}
where $U_1 \in \mathbb{R}^{n_1 \times s_1}$, $W_1 \in \mathbb{R}^{n_1 \times r_1}$, $U_2 \in \mathbb{R}^{r_1 \times n_2 \times s_2}$, $W_2 \in \mathbb{R}^{r_1 \times n_2 \times r_2}$, $Z_2 \in \mathbb{R}^{s_1 \times n_2 \times s_2}$, $V_2 \in \mathbb{R}^{s_1 \times n_2 \times r_2}$, $W_3 \in \mathbb{R}^{r_2 \times n_3}$, $V_3 \in \mathbb{R}^{s_2 \times n_3}$, and $s_i=k_i - r_i$. As for the TTD itself, this parametrization is not unique. In \cite{kutschan2018}, the following orthogonality conditions are derived:
\begin{align*} 
{U}_1^\top X_1' &= 0, &   {W}_1^\top X_1' &= 0, \nonumber\\
\left({U}_2^{\mathrm{R}} \right)^\top X_2'^{\mathrm{R}} &= 0, & \left( {V}_2 \cdot X_3 \right)^{\mathrm{L}} \left(  \left( X_{2}' \cdot X_3 \right)^\mathrm{L} \right)^\top &= 0, \\
\left({W}_2^{\mathrm{R}} \right)^\top X_2'^{\mathrm{R}}&=0, & {V}_3 X_{3}^\top &= 0.  \nonumber
\end{align*} 
We change these orthogonality conditions slightly to make the proofs in the rest of this paper easier and the computations in the experiments more stable. To do so, we notice that 
\begin{equation*}
\begin{split}
G = &\begin{bmatrix} X_1' & U_1  & \dot{W}_1 \end{bmatrix} \cdot \begin{bmatrix}
X_2' & U_2  &  \dot{W}_2\\ 
0 &  Z_2 & \dot{V}_2 \\
0 & 0 &  X_2'' \\
\end{bmatrix} \cdot \begin{bmatrix} W_3 \\  V_3 \\ X_3'' \end{bmatrix}, \\
\end{split}
\end{equation*}
where we have defined $\dot{W}_1 := W_1 R^{-1}$, $\dot{W}_2 := W_2 \cdot C$, $\dot{V}_2 := V_2 \cdot C$, $X_2'' := R \cdot  X_2' \cdot C$, $X_3'' := C^{-1} X_3$, and $R \in \mathbb{R}^{r_1 \times r_1}$ and $C \in \mathbb{R}^{r_2 \times r_2}$ are chosen such that $X_2''$ and $X_3''$ are left-orthogonal. 
Thus, we can also define $X_1:= X_1' R^{-1}$ and $X_2: = X_2' \cdot C$, such that $X = X_1 \cdot X_2'' \cdot X_3'' = X_1' \cdot X_2 \cdot X_3''$. 
Additionally, $W_3$ can be decomposed as $W_3 =  W_3 X_3''^\top X_3'' +  \hat{W}_3$. The two terms involving $W_3$ and $\dot{W}_2$ can then be regrouped as
\begin{equation*} 
\begin{split}
 X_1' \cdot \left(  X_2' \cdot W_3 X_3''^\top + \dot{W}_2\right) \cdot X_3''+  X_1' \cdot X_2' \cdot \hat{W}_3.
\end{split}
\end{equation*}
Defining $\tilde{W}_2 := X_2' \cdot W_3 X_3''^\top + \dot{W}_2$, we obtain
\begin{equation} \label{eq:paran_g_final}
\begin{split}
G = &\begin{bmatrix} X_1' & U_1  & \dot{W}_1 \end{bmatrix} \cdot \begin{bmatrix}
X_2' & U_2  &  \tilde{W}_2\\ 
0 &  Z_2 & \dot{V}_2 \\
0 & 0 &  X_2'' \\
\end{bmatrix} \cdot \begin{bmatrix} \hat{W}_3 \\  V_3 \\ X_3'' \end{bmatrix}, \\
\end{split}
\end{equation}
with the modified orthogonality conditions
\begin{align} \label{eq:orth_cond_K_modified}
{U}_1^\top X_1' &= 0, &  \dot{W}_1^\top X_1' &= 0, & \left({U}_2^{\mathrm{R}} \right)^\top X_2'^{\mathrm{R}} &= 0, \nonumber\\
\hat{W}_3  X_3''^{\top}&=0, & {V}_3 X_{3}''^\top &= 0, & \dot{V}_2^{\mathrm{L}}  \left( X_{2}''^{\mathrm{L}} \right)^\top &= 0. 
\end{align} 
Expanding \eqref{eq:paran_g_final}, a sum of 6 mutually orthogonal TTDs is obtained because of \eqref{eq:inner_prod_zero} and \eqref{eq:orth_cond_K_modified}.

\commC{The projection onto the closed cone $T_X \mathbb{R}_{\leq (k_1 , k_2)}^{n_1 \times n_2 \times n_3}$ is not known and, in general, difficult to obtain because it is nonlinear and nonconvex. However, in what follows we show that any tensor $Y \in \mathbb{R}^{n_1 \times n_2 \times n_3}$ is an element of $T_X \mathbb{R}_{\leq (n_1, n_3)}^{n_1 \times n_2 \times n_3}$. Straightforward computations show that
\begin{equation} \label{eq:Y_6terms}
\begin{split}
Y = ~&P_{X_1'} \cdot Y \cdot P_{X_3''^\top} + P_{X_1'}^\perp \cdot Y \cdot P_{X_3''^\top} \\ 
+ ~&P_{X_1'} \cdot Y \cdot P_{X_3''^\top}^\perp + P_{X_1'}^\perp \cdot Y \cdot P_{X_3''^\top}^\perp\\
= ~&P_{X_1'} \cdot Y \cdot P_{X_3''^\top} \\
+ ~&\left[ P_{X_1'}^\perp \left(  Y \cdot X_3''^\top \right)^\mathrm{L} P_{\left( X_2''^{\mathrm{L}}\right)^\top} \right]^{n_1 \times n_2 \times r_2} \cdot  X_3''\\ 
 + ~& \left[ P_{X_1'}^\perp \left(  Y \cdot X_3''^\top \right)^\mathrm{L} P_{\left( X_2''^{\mathrm{L}}\right)^\top}^\perp \right]^{n_1 \times n_2 \times r_2} \cdot X_3''\\
+ ~&X_1' \cdot \left[ P_{X_2'^{\mathrm{R}}} \left( X_1'^\top \cdot Y  \right)^{\mathrm{R}} P_{X_3''^\top}^\perp \right]^{r_1 \times n_2 \times n_3} \\
+ ~&X_1' \cdot \Big[ P_{X_2'^{\mathrm{R}}}^\perp \left( X_1'^\top \cdot Y  \right)^{\mathrm{R}} P_{X_3''^\top}^\perp \Big]^{r_1 \times n_2 \times n_3}\\
+ ~&P_{X_1'}^\perp \cdot Y \cdot P_{X_3''^\top}^\perp. \\
\end{split}
\end{equation} 
Thus, $Y=Y_1 \cdot Y_2 \cdot Y_3 \in \mathbb{R}_{\leq (n_1, n_3)}^{n_1 \times n_2 \times n_3}$ can be parameterized as in \eqref{eq:paran_g_final}, with 
\begin{equation} \label{eq:param_Y_TC}
\begin{split}
U_1 &= P_{X_1'}^\perp Y_1, \\
U_2 &= \Big[ P_{X_2'^{\mathrm{R}}}^\perp \left( X_1'^\top \cdot Y_1 \cdot Y_2 \right)^{\mathrm{R}} \Big]^{r_1 \times n_2 \times n_3}, \\
\dot{V}_2 &=  \Big[ \left( Y_2 \cdot Y_3 \cdot X_3''^\top \right)^\mathrm{L} P_{\left( X_2''^{\mathrm{L}}\right)^\top}^\perp \Big]^{n_1 \times n_2 \times r_2},  \\
{V}_3 &= Y_3 P_{X_3''^\top}^\perp, \\
\dot{W}_1& = \left( P_{X_1'}^\perp \cdot Y \cdot X_3''^\top \right)^\mathrm{L}  \left( X_2''^{\mathrm{L}}\right)^\top,\\
\tilde{W}_2 &=  X_1'^{\top} \cdot Y \cdot X_3''^{\top}, \\
\hat{W}_3 &= 
\left( X_2'^{\mathrm{R}}\right)^\top \left( X_1'^\top \cdot Y \cdot P_{X_3''^\top}^\perp \right)^{\mathrm{R}}, \quad \\
Z_2 &= Y_2, \\
\end{split}
\end{equation} 
which satisfies \eqref{eq:orth_cond_K_modified}. Furthermore, $U_1$ and $V_3$ have rank at most $n_1 - r_1 $ and $n_3 - r_2$, respectively, and thus the matrix $U_1 \dot{V}_2^\mathrm{L}$ has rank $n_1 - r_1 $, and $U_1$ and $\dot{V}_2^\mathrm{L}$ can be reduced to size $n_1 \times \left( n_1 - r_1 \right)$ and $\left( n_1 - r_1 \right) \times n_2 r_2$, respectively, e.g., by computing the SVD of $U_1 \dot{V}_2^\mathrm{L}$. Similarly, this can be done for $U_2^\mathrm{R} V_3$ to obtain $U_2^\mathrm{R} $ and $V_3$ of size $r_1 n_2 \times \left( n_3 - r_2 \right)$ and $\left( n_3 - r_2 \right) \times n_3$, respectively. Then, $Z_2$ can be changed accordingly to $U_1^\top \cdot Y_2 \cdot V_3^\top$, which is the same result as would be obtained by the TT-rounding algorithm \cite[Algorithm 2]{Oseledets2011}, except for the orthogonality conditions. Thus, $Y$ can be written in the form \eqref{eq:paran_g_final} with $s_1= n_1 - r_1$, $s_2 = n_3 - r_2$ and hence by definition $Y \in T_X \mathbb{R}_{\leq (n_1, n_3)}^{n_1 \times n_2 \times n_3}$.}

The Riemannian gradient of \eqref{eq:low-rank_approx} at $X \in \mathbb{R}_{(r_1, r_2)}^{n_1 \times n_2 \times n_3}$ is  defined as the projection of the Euclidean gradient $\nabla f(X) = X-A$ onto the tangent space \cite{Holtz_manifolds_2012}:
\begin{equation*} 
\begin{split}
&T_X\mathbb{R}_{(r_1, r_2)}^{n_1 \times n_2 \times n_3} := \\
& \left\lbrace\begin{array}{l}
X_1' \cdot X_2' \cdot \hat{W}_3 + X_1' \cdot \tilde{W}_2 \cdot X_3'' + \dot{W}_1 \cdot X_2'' \cdot X_3''
\end{array}\right\rbrace.
\end{split}
\end{equation*} 
By replacing $Y$ by $X-A$ in \eqref{eq:param_Y_TC}, the parameters of $\mathcal{P}_{T_X\mathbb{R}_{(r_1, r_2)}^{n_1 \times n_2 \times n_3}} \nabla f(X)$ are:
\begin{equation} \label{eq:proj_tangent_space_param}
\begin{split}
\dot{W}_1 &= - \left(  P_{X_1'}^\perp \cdot A \cdot X_3''^\top \right)^\mathrm{L}  \left( X_2''^{\mathrm{L}}\right)^\top,  \\ 
\tilde{W}_2 &= {X}_2 - X_1'^{\top} \cdot A \cdot X_3''^{\top}, \\
\hat{W}_3 &= 
-\left( X_2'^{\mathrm{R}}\right)^\top \left( X_1'^\top \cdot A \cdot P_{X_3''^\top}^\perp \right)^{\mathrm{R}}.
\end{split}
\end{equation}
A similar projection onto the tangent space was used in \cite{Steinl_high_dim_TT_compl_2016} and \cite{Lubich_TT_time_int_2015}, but with different orthogonality conditions. 

\section{Rank Estimation} \label{sec:rank_est}
\Cref{prop:rank_est_best_low_rank_approx} states the main result for the LRTAP \eqref{eq:low-rank_approx}. Afterwards, the estimated rank \eqref{eq:num_rank_increase} is defined to extend this result to the LRTCP \eqref{eq:min_completion_TT}. To prove \Cref{prop:rank_est_best_low_rank_approx}, the following auxiliary lemma is used. 

\begin{lemma} \label{lemma:local_min_best_approx}
Let $X = X_1' \cdot {X}_2 \cdot X_3'' = X_1' \cdot {X}_2' \cdot X_3 = X_1 \cdot {X}_2'' \cdot X_3'' \in \mathbb{R}_{(r_1, r_2)}^{n_1 \times n_2 \times n_3}$, with $X_1' \in \Stiefel(r_1,n_1)$, ${X_2'}^\mathrm{R} \in \Stiefel(r_2,r_1 n_2)$, $\left({X_2''}^\mathrm{L}\right)^\top \in \Stiefel(r_1,n_2 r_2)$, and ${X_3''}^\top \in \Stiefel(r_2,n_3)$, and let $A = A_1' \cdot {A}_2 \cdot A_3'' \in \mathbb{R}_{(r_1', r_2')}^{n_1 \times n_2 \times n_3}$, with $A_1' \in \Stiefel(r_1',n_1)$ and ${A_3''}^\top \in \Stiefel(r_2',n_3)$. If $\mathcal{P}_{T_{X}\mathbb{R}_{(r_1, r_2)}^{n_1 \times n_2 \times n_3}} \nabla f \left(X \right)=0$, then
\begin{align*}
X_1'= A_1' B_1, && X_3'' = B_3 A_3'', && {X}_2 = B_1^\top \cdot {A}_2 \cdot B_3^\top,
\end{align*}
for some $B_1 \in \Stiefel(r_1,r'_1)$ and $B_3^\top \in \Stiefel(r_2,r_2')$.
\end{lemma}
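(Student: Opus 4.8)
The plan is to unpack the stationarity condition $\mathcal{P}_{T_X\mathbb{R}_{(r_1,r_2)}^{n_1\times n_2\times n_3}}\nabla f(X)=0$ into its three components from \eqref{eq:proj_tangent_space_param}, namely $\dot W_1=0$, $\tilde W_2=0$, $\hat W_3=0$, and to handle the three claimed identities separately. The middle one is immediate: $\tilde W_2=0$ reads $X_2=X_1'^\top\cdot A\cdot X_3''^\top$, so substituting $A=A_1'\cdot A_2\cdot A_3''$ and setting $B_1:=A_1'^\top X_1'$ and $B_3:=X_3''A_3''^\top$ gives $X_2=B_1^\top\cdot A_2\cdot B_3^\top$ at once. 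The whole difficulty is then to prove the two inclusions $\operatorname{col}X_1'\subseteq\operatorname{col}A_1'$ and $\operatorname{row}X_3''\subseteq\operatorname{row}A_3''$: the first yields $X_1'=P_{A_1'}X_1'=A_1'B_1$, the second yields $X_3''=X_3''P_{A_3''^\top}=B_3A_3''$, and once they hold the Stiefel properties follow mechanically, since $B_1^\top B_1=X_1'^\top P_{A_1'}X_1'=X_1'^\top X_1'=I_{r_1}$ and likewise $B_3B_3^\top=X_3''P_{A_3''^\top}X_3''^\top=I_{r_2}$ (these also force $r_1\le r_1'$ and $r_2\le r_2'$, as needed for $B_1,B_3^\top$ to be Stiefel matrices).

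For the mode-1 inclusion I would recast $\dot W_1=0$ as an invariant-subspace statement. Writing $H:=(A\cdot X_3''^\top)^\mathrm{L}$, the identity $A\cdot X_3''^\top=A_1'\cdot A_2\cdot B_3^\top$ together with \eqref{eq:LR_TTD} gives $H=A_1'(A_2\cdot B_3^\top)^\mathrm{L}$, so $\operatorname{col}H\subseteq\operatorname{col}A_1'$; moreover, using $X_2=X_1'^\top\cdot A\cdot X_3''^\top$ one gets $X\cdot X_3''^\top=P_{X_1'}\cdot A\cdot X_3''^\top=X_1\cdot X_2''$, whence $P_{X_1'}H=X_1X_2''^\mathrm{L}$, whose row space equals $\operatorname{row}X_2''^\mathrm{L}$ because $X_1$ has full column rank. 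Since $(X_2''^\mathrm{L})^\top$ has orthonormal columns spanning $\operatorname{row}X_2''^\mathrm{L}$, the condition $\dot W_1=-(P_{X_1'}^\perp H)(X_2''^\mathrm{L})^\top=0$ is equivalent to $(P_{X_1'}^\perp H)(P_{X_1'}H)^\top=0$, i.e.\ $P_{X_1'}^\perp(HH^\top)P_{X_1'}=0$. Thus $\operatorname{col}X_1'$ is invariant under the symmetric matrix $HH^\top$, hence a sum of its eigenspaces. It remains to exclude the eigenvalue-$0$ eigenspace $\ker H^\top$: for $0\neq v\in\operatorname{col}X_1'$ one has $H^\top v=(P_{X_1'}H)^\top v=(X_2''^\mathrm{L})^\top X_1^\top v\neq0$, because the restriction of $X_1^\top$ to $\operatorname{col}X_1'$ is injective and $(X_2''^\mathrm{L})^\top$ has trivial kernel. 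Hence $\operatorname{col}X_1'\cap\ker H^\top=\{0\}$, so $\operatorname{col}X_1'\subseteq\operatorname{col}H\subseteq\operatorname{col}A_1'$.

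The mode-3 inclusion is the mirror image, driven by $\hat W_3=0$. Setting $\tilde H:=(X_1'^\top\cdot A)^\mathrm{R}=(B_1^\top\cdot A_2\cdot A_3'')^\mathrm{R}$, whose rows lie in $\operatorname{row}A_3''$, I would use $X_1'^\top\cdot X=X_2'\cdot X_3$ to obtain $\tilde HP_{X_3''^\top}=X_2'^\mathrm{R}X_3$; transposing $\hat W_3=-(X_2'^\mathrm{R})^\top\tilde HP_{X_3''^\top}^\perp=0$ gives $P_{X_3''^\top}^\perp\tilde H^\top X_2'^\mathrm{R}=0$, and combining the two yields $P_{X_3''^\top}^\perp\tilde H^\top\tilde HP_{X_3''^\top}=0$. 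So $\operatorname{row}X_3''=\operatorname{col}X_3''^\top$ is $\tilde H^\top\tilde H$-invariant, and the same injectivity step (now noting that for $0\neq v\in\operatorname{col}X_3''^\top$ one has $X_3v\neq0$ since $X_3=CX_3''$, and $X_2'^\mathrm{R}$ injective, so $\tilde Hv=X_2'^\mathrm{R}X_3v\neq0$) rules out $\ker\tilde H$, giving $\operatorname{row}X_3''\subseteq\operatorname{row}\tilde H\subseteq\operatorname{row}A_3''$. Assembling the three pieces completes the proof.

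I expect the main obstacle to be the algebraic recasting of $\dot W_1=0$ and $\hat W_3=0$ into the clean commutator form $P^\perp(HH^\top)P=0$: this hinges on identifying $\operatorname{row}X_2''^\mathrm{L}$ (resp.\ $\operatorname{col}X_2'^\mathrm{R}$) with the space generated by the data through $H$ (resp.\ $\tilde H$), which relies on the unfolding rules \eqref{eq:LR_TTD} and careful bookkeeping with the orthogonality conditions \eqref{eq:orth_cond_K_modified}. The ensuing invariant-subspace and injectivity arguments are then routine.
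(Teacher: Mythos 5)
Your proof is correct, but it takes a genuinely different route from the paper's. The paper argues in coordinates: it expands $X_1' = A_1'B_1 + A_1'^{\perp}B_2$ and $X_3'' = B_3A_3'' + B_4A_3''^{\perp}$ in orthonormal bases adapted to $A$, substitutes these into the closed-form expressions \eqref{eq:proj_tangent_space_param}, and, after multiplying $\dot{W}_1=0$ on the left by $\left(A_1'^{\perp}\right)^\top$, reduces it to $B_2 X_2^{\mathrm{L}}\left(X_2''^{\mathrm{L}}\right)^\top = B_2R^{-1}X_2''^{\mathrm{L}}\left(X_2''^{\mathrm{L}}\right)^\top = B_2R^{-1} = 0$, so the complement coefficients $B_2$ (and symmetrically $B_4$, from $\hat{W}_3=0$) vanish; the Stiefel properties of $B_1$ and $B_3^\top$ then follow from those of $X_1'$ and $X_3''$. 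You never introduce the complements $A_1'^{\perp}$, $A_3''^{\perp}$: instead you recast $\dot{W}_1=0$ and $\hat{W}_3=0$ as invariant-subspace conditions $P_{X_1'}^\perp\left(HH^\top\right)P_{X_1'}=0$ and $P_{X_3''^\top}^\perp\left(\tilde{H}^\top\tilde{H}\right)P_{X_3''^\top}=0$ for the positive-semidefinite matrices built from $H=\left(A\cdot X_3''^\top\right)^{\mathrm{L}}$ and $\tilde{H}=\left(X_1'^\top\cdot A\right)^{\mathrm{R}}$, and then combine a spectral argument with an injectivity argument ($\operatorname{col}X_1'\cap\ker H^\top=\lbrace 0\rbrace$, and its mode-3 mirror) to obtain the inclusions $\operatorname{col}X_1'\subseteq\operatorname{col}H\subseteq\operatorname{col}A_1'$ and $\operatorname{row}X_3''\subseteq\operatorname{row}\tilde{H}\subseteq\operatorname{row}A_3''$. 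Both proofs use the same three stationarity components and the same full-rank facts (left-orthogonality of $X_2''$, orthonormality of $X_2'^{\mathrm{R}}$, invertibility of $R$ and $C$). The paper's computation is shorter and entirely elementary; yours is coordinate-free, makes the geometric content explicit (stationarity forces the factors of $X$ to span subspaces of the corresponding factors of $A$), and yields the Stiefel properties of $B_1$ and $B_3^\top$ automatically from the inclusions. Two minor polish points: your phrase ``a sum of its eigenspaces'' should read ``spanned by eigenvectors of $HH^\top$'' (an invariant subspace need not contain full eigenspaces), and the eigendecomposition can be avoided altogether, since injectivity of $HH^\top$ on the invariant subspace $\operatorname{col}X_1'$ already gives $\operatorname{col}X_1' = HH^\top\left(\operatorname{col}X_1'\right)\subseteq\operatorname{col}H$.
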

\begin{proof}
From \eqref{eq:proj_tangent_space_param}, $\mathcal{P}_{T_{X}\mathbb{R}_{(r_1,r_2)}^{n_1 \times n_2  \times n_3}}\nabla f \left(X \right)=0$ if and only if $\dot{W}_1 = 0$, $\tilde{W}_2 = 0$, and $\hat{W}_3 = 0$.
From the second equation in \eqref{eq:proj_tangent_space_param}, it is clear that $\tilde{W}_2$ can only be zero if $X_2 = X_1'^{\top} \cdot A \cdot X_3''^{\top}$. The matrices $X_1'$ and $X_3''$ are decomposed as
\begin{align} \label{eq:X1_X3_ifv_A1_A3}
&X_1' = \begin{bmatrix} A_1'  & A_1'^{\perp} \end{bmatrix} \begin{bmatrix} B_1  \\ B_2 \end{bmatrix} 
= A_1' B_1 + A_1'^{\perp} B_2,\\
&X_3'' =\begin{bmatrix} B_3  & B_4 \end{bmatrix} \begin{bmatrix} A_3''  \\ A_3''^{\perp} \end{bmatrix} = B_3 A_3'' + B_4 A_3''^{\perp} \nonumber,
\end{align}
where $\begin{bmatrix} A_1'  & A_1'^{\perp} \end{bmatrix} \in \Stiefel(n_1, n_1)$, $\begin{bmatrix} {A_3''} \\ A_3''^{\perp}  \end{bmatrix} \in \Stiefel(n_3, n_3)$, $\begin{bmatrix} B_1 \\ B_2 \end{bmatrix} \in \Stiefel(r_1, r_1 + r_1')$, and $\begin{bmatrix} B_3^\top \\ B_4^\top \end{bmatrix} \in \Stiefel(r_2, r_2 + r_2')$. Substituting \eqref{eq:X1_X3_ifv_A1_A3} into the equation for $X_2$, we obtain $X_2 = B_1^\top \cdot A_2 \cdot B_3^\top$.

Substituting \eqref{eq:X1_X3_ifv_A1_A3} into the equation for $\dot{W}_1$ in \eqref{eq:proj_tangent_space_param}, we get
\begin{equation*}
\begin{split}
&\dot{W}_1 = 0 \Leftrightarrow
 \Big( \Big( I_{n_1} - A_1' B_1 B_1^{\top} A_1'^{\top} - A_1'^{\perp} B_2 B_1^{\top} A_1'^{\top} -  \\
& A_1' B_1 B_2^{\top} \left( A_1'^\perp \right)^{\top} -A_1'^{\perp} B_2 B_2^{\top} \left( A_1'^\perp \right)^{\top} \Big)A_1' \cdot A_2 \cdot A_3'' \\
& \left( A_3''^{\top} B_3^{\top} + \left(A_3''^\perp \right)^{\top} B_4^{\top} \right) \Big)^\mathrm{L} \left( {X_2''}^{\mathrm{L}}\right)^{\top} = 0 \Leftrightarrow\\ 
&\left( \left( A_1' - A_1' B_1 B_1^{\top} - A_1'^{\perp} B_2 B_1^{\top} \right) \cdot A_2 \cdot B_3^{\top} \right)^{\mathrm{L}} \big( {X_2''}^{\mathrm{L}}\big)^{\top} = 0. \\ 
\end{split}
\end{equation*}
Multiplying both sides by $\left(A_1'^\perp \right)^\top$, we obtain
\begin{equation*}
\begin{split}
\dot{W}_1 = 0 &\Rightarrow
B_2 B_1^{\top} A_2^{\mathrm{L}}  \left( B_3^{\top} \otimes I_{n_2} \right) \left(  {X_2''}^{\mathrm{L}}\right)^{\top} = 0 \\
&\Leftrightarrow
B_2  {X_2}^{\mathrm{L}} \left( {X_2''}^{\mathrm{L}}\right)^{\top} = 0  \\
&\Leftrightarrow
B_2 R^{-1} {X_2''}^{\mathrm{L}} \left( {X_2''}^{\mathrm{L}}\right)^{\top} = 0 \Leftrightarrow
B_2 = 0.
\end{split}
\end{equation*}
Thus, it holds that $X_1' = A_1' B_1$ with $B_1 \in \Stiefel(r_1,r_1')$. From $\hat{W}_3 =0$ in \eqref{eq:proj_tangent_space_param},
it can similarly be derived that $B_4 = 0$. Hence, $X_3'' = B_3 A_3''$ and $B_3^\top \in \Stiefel(r_2,r_2')$. 
\end{proof}

The two equalities in \Cref{prop:rank_est_best_low_rank_approx} enable to deduce the TT-rank of $A$---and thus a value of $(k_1, k_2)$ for which the optimum of LRTAP \eqref{eq:low-rank_approx} is zero---from any stationary point of $\min_{X \in \mathbb{R}_{(r_1, r_2)}^{n_1 \times n_2 \times n_3}} f(X)$.

\begin{proposition}
\label{prop:rank_est_best_low_rank_approx}
If the same conditions as in \Cref{lemma:local_min_best_approx} hold, 
then $\nabla f(X) \in T_{X}\mathbb{R}_{\le (r_1', r_2')}^{n_1 \times n_2 \times n_3}$, and
\begin{equation*}
\begin{split}
&\left( \rank \left( \left( \nabla f(X) \cdot X_3''^\top \right)^\mathrm{L} \right),\rank \left( \left( X_1'^\top \cdot \nabla f(X) \right)^R \right)\right) \\
&= \left( r'_1-r_1, r'_2-r_2 \right).
\end{split}
\end{equation*}
\end{proposition}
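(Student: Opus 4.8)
The plan is to feed the conclusions of \Cref{lemma:local_min_best_approx}, namely $X_1' = A_1' B_1$, $X_3'' = B_3 A_3''$ and $X_2 = B_1^\top \cdot A_2 \cdot B_3^\top$ with $B_1 \in \Stiefel(r_1,r_1')$ and $B_3^\top \in \Stiefel(r_2,r_2')$, into $\nabla f(X) = X - A$ and to exploit the orthonormality relations $X_3'' X_3''^\top = I_{r_2}$, $A_3'' A_3''^\top = I_{r_2'}$, $X_1'^\top X_1' = I_{r_1}$, $A_1'^\top A_1' = I_{r_1'}$. Since $X \cdot X_3''^\top = P_{X_1'} (A\cdot X_3''^\top)$ and $A_3'' X_3''^\top = B_3^\top$, a short computation gives the closed forms
\[
\nabla f(X) \cdot X_3''^\top = -A_1' P_{B_1}^\perp \cdot A_2 \cdot B_3^\top, \qquad X_1'^\top \cdot \nabla f(X) = -B_1^\top \cdot A_2 \cdot P_{B_3^\top}^\perp A_3'',
\]
where $P_{B_1}^\perp = I_{r_1'} - B_1 B_1^\top$ and $P_{B_3^\top}^\perp = I_{r_2'} - B_3^\top B_3$ are orthogonal projections of rank $r_1'-r_1$ and $r_2'-r_2$. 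The same manipulation without the contraction yields $P_{X_1'}^\perp \cdot \nabla f(X) = -A_1' P_{B_1}^\perp \cdot A_2 \cdot A_3''$ and $\nabla f(X) \cdot P_{X_3''^\top}^\perp = -A_1' \cdot A_2 \cdot P_{B_3^\top}^\perp A_3''$, which I will use for the membership claim.

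For the membership $\nabla f(X) \in T_X \mathbb{R}_{\le (r_1',r_2')}^{n_1\times n_2\times n_3}$ I would start from the orthogonal splitting \eqref{eq:Y_6terms} applied to $Y = \nabla f(X)$. By \eqref{eq:proj_tangent_space_param}, the hypothesis $\mathcal{P}_{T_X \mathbb{R}_{(r_1,r_2)}^{n_1\times n_2\times n_3}} \nabla f(X) = 0$ annihilates the three tangent-space terms (those carrying $\tilde W_2$, $\dot W_1$, $\hat W_3$), so only the terms containing $P_{X_1'}^\perp$ on the left or $P_{X_3''^\top}^\perp$ on the right survive. Their first- and third-mode factors are $A_1' P_{B_1}^\perp$ and $P_{B_3^\top}^\perp A_3''$ from the previous paragraph, of ranks $r_1'-r_1$ and $r_2'-r_2$. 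Hence the surviving sum can be written in the form \eqref{eq:paran_g_final} with $U_1$ carrying at most $r_1'-r_1$ columns and $V_3$ at most $r_2'-r_2$ rows, i.e.\ with $s_1 \le r_1'-r_1$ and $s_2 \le r_2'-r_2$; by definition of the tangent cone this places $\nabla f(X)$ in $T_X \mathbb{R}_{\le (r_1',r_2')}^{n_1\times n_2\times n_3}$.

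The two rank equalities then remain, and here the upper bounds are immediate. From the closed forms above, $A_1'$ has full column rank and $A_3''$ has full row rank, so, using \eqref{eq:LR_TTD}, $\rank\big((\nabla f(X) \cdot X_3''^\top)^\mathrm{L}\big) = \rank\big(P_{B_1}^\perp A_2^\mathrm{L}(B_3^\top \otimes I_{n_2})\big) \le \rank(P_{B_1}^\perp) = r_1'-r_1$, and symmetrically $\rank\big((X_1'^\top \cdot \nabla f(X))^\mathrm{R}\big) = \rank\big((I_{n_2}\otimes B_1^\top) A_2^\mathrm{R} P_{B_3^\top}^\perp\big) \le r_2'-r_2$.

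The matching lower bounds are the hard part and I expect them to be the main obstacle. Establishing $\ge r_1'-r_1$ amounts to showing that contracting the minimal core $A_2$ on the right by $B_3^\top$ does not destroy its left rank, i.e.\ that $P_{B_1}^\perp$ still sees the whole of $\mathbb{R}^{r_1'}$ through $A_2^\mathrm{L}(B_3^\top \otimes I_{n_2})$; note this can genuinely fail in degenerate regimes (e.g.\ when $n_2 r_2 < r_1'$ the contracted matrix $A_2^\mathrm{L}(B_3^\top \otimes I_{n_2}) \in \mathbb{R}^{r_1'\times n_2 r_2}$ cannot even reach rank $r_1'$). I would therefore invoke the minimality of the TTD of $A$, which yields $\rank A_2^\mathrm{L} = r_1'$ and $\rank A_2^\mathrm{R} = r_2'$, together with the non-degeneracy condition that the restriction of $A_2^\mathrm{L}$ to $\mathrm{range}(B_3^\top \otimes I_{n_2})$ remains surjective onto $\mathbb{R}^{r_1'}$ (and, for the second rank, that $I_{n_2}\otimes B_1^\top$ stays injective on $\mathrm{range}(A_2^\mathrm{R})$). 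Under these conditions the relevant ranges are preserved, the projections $P_{B_1}^\perp$ and $P_{B_3^\top}^\perp$ act on full spaces, and both equalities follow; the symmetry between the two assertions means the second is then obtained by transposing the first argument verbatim.
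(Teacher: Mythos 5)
Your proof follows the same route as the paper's. The paper likewise substitutes the conclusions of \Cref{lemma:local_min_best_approx} into $\nabla f(X)=X-A$ and obtains $\nabla f(X)\cdot X_3''^\top = A_1'\left(B_1B_1^\top-I_{r_1'}\right)\cdot A_2\cdot B_3^\top$, which is exactly your $-A_1'P_{B_1}^\perp\cdot A_2\cdot B_3^\top$; it proves the cone membership by applying \eqref{eq:Y_6terms} to $\nabla f(X)$ with the three terms carrying $\dot{W}_1$, $\tilde{W}_2$, $\hat{W}_3$ annihilated, exactly as you do; and it reduces both rank equalities to the claim that $\left(A_2\cdot B_3^\top\right)^\mathrm{L}$ (mirror-image: $\left(B_1^\top\cdot A_2\right)^\mathrm{R}$) has full rank. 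Your membership argument is in fact marginally more robust than the paper's, since it uses only the bounds $s_1\le r_1'-r_1$ and $s_2\le r_2'-r_2$ rather than the exact ranks.

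The divergence is precisely the step you flag as the hard part. Where you introduce an explicit non-degeneracy hypothesis, the paper claims a proof: it asserts that $\left(A_2\cdot B_3^\top\right)^\mathrm{L}$ has full row rank $r_1'$ ``knowing that $X_2^\mathrm{L} = B_1^\top \left(A_2 \cdot B_3^\top \right)^\mathrm{L}$ has rank $r_1$ and using the Sylvester rank inequality.'' That justification does not hold up: Sylvester applied to this product gives $r_1\ge r_1+\rank\left(\left(A_2\cdot B_3^\top\right)^\mathrm{L}\right)-r_1'$, i.e., only the trivial bound $\rank\le r_1'$, while the product-rank bound gives only $\rank\ge r_1$; neither forces $\rank=r_1'$ (and applying Sylvester instead to $A_2^\mathrm{L}\left(B_3^\top\otimes I_{n_2}\right)$ gives only $\rank\ge r_1'-n_2(r_2'-r_2)$). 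Moreover, your suspicion that the property can genuinely fail is correct, and not only when $n_2r_2<r_1'$: take $n_2=1$, $A=\mathrm{diag}(3,2,1)$ viewed as a $3\times1\times3$ tensor (so $r_1'=r_2'=3$, $A_1'=A_3''=I_3$, $A_2=A$), and $X=\mathrm{diag}(3,0,0)$ (so $r_1=r_2=1$, $X_1'=e_1$, $X_2=3$, $X_3''=e_1^\top$, with $e_1$ the first standard basis vector). All hypotheses of \Cref{lemma:local_min_best_approx} hold---the three quantities in \eqref{eq:proj_tangent_space_param} vanish---yet $\nabla f(X)\cdot X_3''^\top=(X-A)\,e_1=0$ has rank $0\ne r_1'-r_1=2$, and correspondingly $\left(A_2\cdot B_3^\top\right)^\mathrm{L}=3e_1$ has rank $1$, not $3$; block-orthogonal variants of this construction exist for every $n_2$. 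So the gap you leave open is a genuine gap, but it coincides with a gap in the paper's own proof, and it cannot be closed from the stated hypotheses alone: the two rank equalities require an additional assumption of the kind you formulate (or genericity of $A$ and of the stationary point), whereas the membership claim and the upper bounds $\rank\le r_1'-r_1$ and $\rank\le r_2'-r_2$, which you do establish, hold unconditionally.
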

\begin{proof}
By decomposing $\nabla f(X)$ as $Y$ in \eqref{eq:Y_6terms} and by setting $\dot{W}_1$, $\tilde{W}_2$, and $\hat{W}_3$ in \eqref{eq:param_Y_TC} to zero because $\mathcal{P}_{T_X \mathbb{R}_{(r_1, r_2)}^{n_1 \times n_2 \times n_3}} \nabla f(X) = 0$, we obtain
\begin{equation*}
\begin{split}
&\nabla f(X)
=~ P_{X_1'}^\perp \cdot \nabla f(X) \cdot P_{X_3''^\top}^\perp \\
+ ~ & X_1' \cdot \Big[ P_{X_2'^{\mathrm{R}}}^\perp  \left( X_1'^\top \cdot \nabla f(X)  \right)^{\mathrm{R}}\Big]^{r_1 \times n_2 \times n_3} \cdot P_{X_3''^\top}^\perp \\
+ ~ & P_{X_1'}^\perp \cdot \Big[ \left(  \nabla f(X) \cdot X_3''^\top \right)^\mathrm{L}  P_{\left( X_2''^{\mathrm{L}}\right)^\top}^\perp \Big]^{n_1 \times n_2 \times r_2} \cdot X_3''.
\end{split}
\end{equation*}
Multiplying both sides on the right by $X_3''^\top$, we obtain
\begin{equation*}
\begin{split}
& \nabla f(X) \cdot X_3''^\top =\\
 &
~ P_{X_1'}^\perp \cdot \Big[ \left(  \nabla f(X) \cdot X_3''^\top \right)^\mathrm{L} P_{\left( X_2''^{\mathrm{L}}\right)^\top}^\perp \Big]^{n_1 \times n_2 \times r_2} = U_1 \cdot \dot{V}_2,
\end{split}
\end{equation*}
with $U_1$ and $\dot{V}_2$ as in \eqref{eq:param_Y_TC} with $Y$ replaced by $\nabla f(X)$.

Furthermore, from \Cref{lemma:local_min_best_approx}, we know that $X_1' = A_1'B_1$, $X_3'' = B_3 A_3''$, and $X_2=B_1^\top \cdot A_2 \cdot B_3^\top$, for some $B_1 \in \Stiefel(r_1,r_1')$ and $B_3^\top \in \Stiefel(r_2,r_2')$. Therefore,
\begin{equation*}
\begin{split}
&\rank \left( \left( \nabla f(X) \cdot X_3''^\top \right)^\mathrm{L} \right) \\
&= \rank \left( \left( X_1' \cdot X_2 - A_1' \cdot A_2 \cdot A_3'' \cdot X_3''^\top \right)^\mathrm{L} \right) \\
&= \rank \left( \left( A_1' B_1 \cdot B_1^\top \cdot A_2 \cdot B_3^\top  - A_1' \cdot A_2 \cdot B_3^\top \right)^\mathrm{L} \right) \\
&= \rank \left( A_1' \left( B_1 B_1^\top - I_{r_1'} \right) \left( A_2 \cdot B_3^\top \right) ^\mathrm{L} \right) \\
&= \mathrm{rank} \left( I_{r'_1} - B_1 B_1^\top \right) = r'_1 - r_1
\end{split}
\end{equation*}
because $\left( A_2 \cdot B_3^\top \right)^\mathrm{L}$ has full rank $r_1'$, knowing that $X_2^\mathrm{L} = B_1^\top \left(A_2 \cdot B_3^\top \right)^\mathrm{L}$ has rank $r_1$ and using the Sylvester rank inequality. 
Thus, $\rank \left( U_1 \dot{V}_2^{\mathrm{L}} \right) = r'_1 - r_1$.
A similar derivation can be made for $\left( X_1'^\top \cdot \nabla f(X) \right)^\mathrm{R} = U_2^\mathrm{R} V_3$. Hence, $\nabla f(X)$ can be parameterized as in \eqref{eq:paran_g_final} with $s_1= r'_1-r_1$, $s_2 = r'_2-r_2)$, and thus by definition $\nabla f(X) \in T_{X} \mathbb{R}_{\le (r_1', r_2')}^{n_1 \times n_2 \times n_3}$.
\end{proof}

We propose to exploit the two equalities from \Cref{prop:rank_est_best_low_rank_approx} in the context of LRTCP \eqref{eq:min_completion_TT} by using the \emph{estimated rank} of $B \in \mathbb{R}^{n \times m}$ which is inspired by \cite{gao2022riemannian} and defined as:
\begin{equation} \label{eq:num_rank_increase}
\begin{split}
&\tilde{r}_s \left( B \right) := \begin{cases} 
0 & \text{if } B = 0, \\
\argmax_{\substack{j\le s}} \frac{\sigma_j\left( B\right) - \sigma_{j+1}\left( B \right)}{\sigma_j \left( B\right)}& \text{otherwise},
\end{cases}
\end{split}
\end{equation}
where $\sigma_{j}\left( B \right)$, $j= 1 \dots \rank(B)$, denote the singular values of $B$ in decreasing order, i.e., $\sigma_i(B) \geq \sigma_j(B)$ for $i \leq j$, and $ s < \mathrm{rank} \left( B \right)$. The upper bound $s$ prevents the estimated rank from being too high and should be chosen by the user. 
Thus, we propose $\left(r_1 + \tilde{r}_s \big( \left( \nabla f_{\Omega}(X) \cdot X_3''^\top \right)^\mathrm{L} \big), r_2 + \tilde{r}_s \big( \left( X_1'^\top \cdot \nabla f_{\Omega}(X) \right)^R \big)\right)$ as an adequate value for $(k_1,k_2)$ in \eqref{eq:min_completion_TT}, where $X=X_1' \cdot X_2 \cdot X_3''$ has been obtained by running a Riemannian optimization algorithm on
\begin{equation} \label{eq:min_completion_TT_fixed}
\min_{X \in \mathbb{R}_{ (r_1, r_2)}^{n_1 \times n_2 \times n_3}} {\frac{1}{2} \lVert X_{\Omega} - A_{\Omega} \rVert^2}.
\end{equation} 

\section{Experiments} \label{sec:experiments}
In this section, three experiments are generated where we have optimized \eqref{eq:min_completion_TT_fixed} for $(r_1,r_2) := (2,2)$, $n_1:=n_2:=n_3:=100$, and $(r_1',r_2'):= (6,6)$, using a Riemannian conjugate gradient (CG) algorithm \cite{Steinl_high_dim_TT_compl_2016,manopt}. The tensor $A$ and the starting point $X_0$ given to the optimization algorithm are generated as follows:
\begin{equation*} \label{eq:A_randn}
\begin{split}
A = \mlin{randn} \left( n_1,r_1' \right) 
\cdot \mlin{randn} \left( r_1', n_2, r_2' \right) \cdot \mlin{randn} \left( r_2', n_3 \right), \\
X_0 = \mlin{randn} \left( n_1,r_1 \right) 
\cdot \mlin{randn} \left( r_1, n_2, r_2 \right) \cdot \mlin{randn} \left( r_2, n_3 \right),
\end{split}
\end{equation*}
where $\mlin{randn}$ is a built-in \Matlab function to generate pseudo-random numbers. It can be shown that the elements of $A$, generated in this way, have standard deviation $\sqrt{r'_1 r'_2}=6$. To obtain $A_\Omega$, $4 \cdot 10^4$ random samples of this tensor were generated.
An illustration of how the estimated rank of $\left( \nabla f_{\Omega }(X) \cdot X_3''^\top \right)^\mathrm{L}$ can be used to estimate a good value for $(k_1,k_2)$ in \eqref{eq:min_completion_TT} is given in \Cref{fig:svd_grad_Omega_vs_proj_rA6_6_local_opt_ni_100_Omega_40000}. The first $20$ singular values of $\left( \nabla f_{\Omega }(X) \cdot X_3''^\top \right)^\mathrm{L}$ are shown in the left upper subfigure. The squared norm of the Riemannian gradient that is obtained at $X$ is approximately $10^{-8}$. There were 200 iterations needed to obtain this accuracy. Based on the upper right subfigure, where the relative gap between the singular values is shown, it can be seen that the estimated rank equals $r'_1 - r_1$. In the lower two subfigures, the first $20$ singular values of $\nabla f_\Omega (X)^\mathrm{L}$ are shown. The estimated rank of $\nabla f_\Omega (X)^\mathrm{L}$ equals $3$, and thus cannot be used to estimate the rank of $A$. 

\begin{figure}
\centering
\includegraphics[width= \linewidth]{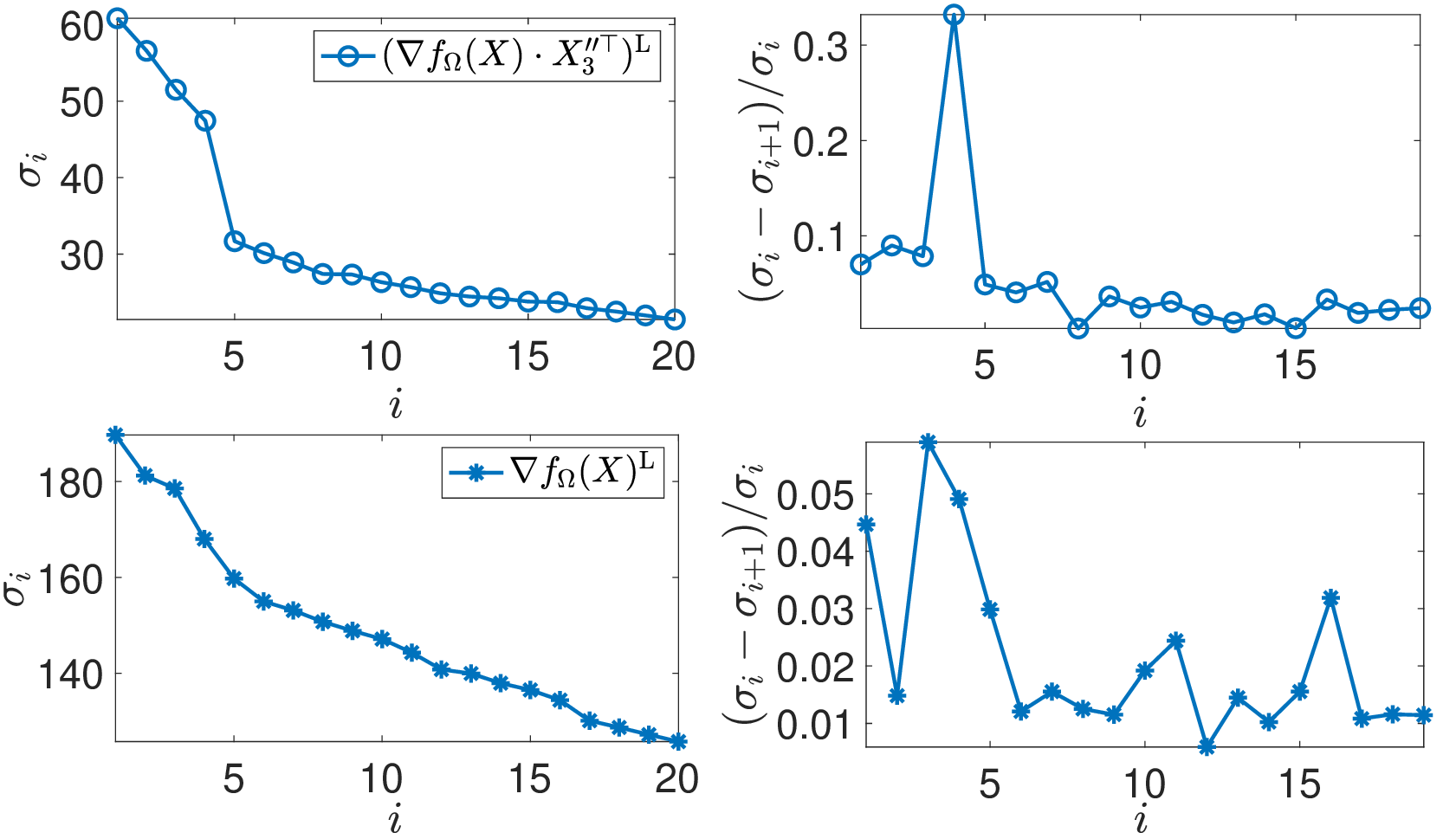}
\caption{An illustration of the advantage of $\tilde{r}_{20} \left( \left( \nabla f_{\Omega }(X) \cdot X_3''^\top \right)^\mathrm{L} \right)$ compared to $\tilde{r}_{20} \big( \nabla f_{\Omega}(X)^\mathrm{L}\big)$, to estimate the rank of $A$, for $\lVert \mathcal{P}_{T_X \mathbb{R}_{(r_1,r_2)}^{n_1 \times n_2 \times n_3}} \nabla f_{\Omega}(X) \rVert^2 = 10^{-8}$, obtained after 200 iterations. }
\label{fig:svd_grad_Omega_vs_proj_rA6_6_local_opt_ni_100_Omega_40000}
\end{figure}

In \Cref{fig:svd_grad_proj_rx2_n100_3D_r6_Omega_4_10+4_gradR2_10_60it}, it is shown that in practice the norm of the Riemannian gradient does not need to be very small for the estimated rank of $\left( \nabla f_{\Omega }(X) \cdot X_3''^\top \right)^\mathrm{L}$ to equal $r'_1 - r_1$. In this experiment, only 10 iterations of the Riemannian CG algorithm where used, such that the squared norm of the Riemannian gradient was approximately $504$. However the estimated rank still equals $r_1'-r_1$.

\begin{figure}
\centering
\includegraphics[width= \linewidth]{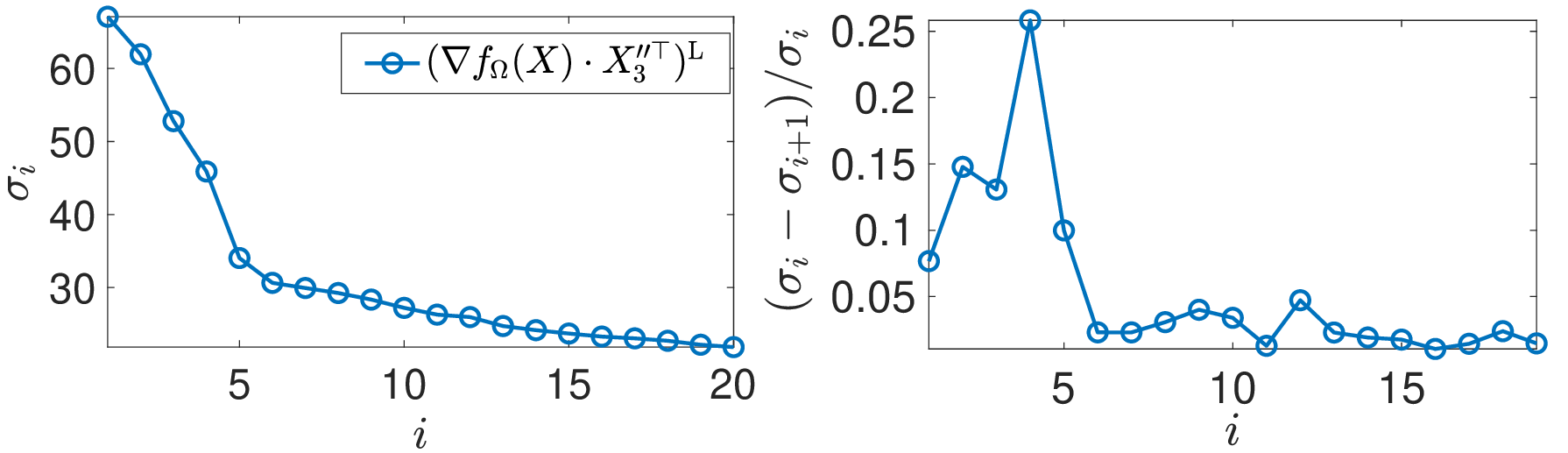}
\caption{The first 20 singular values of $\left( \nabla f_{\Omega }(X) \cdot X_3''^\top \right)^\mathrm{L}$ (left) and their relative gap (right), for $\lVert \mathcal{P}_{T_X \mathbb{R}_{(r_1,r_2)}^{n_1 \times n_2 \times n_3}} \nabla f_{\Omega}(X) \rVert^2 = 504$, obtained after 10 iterations.}
\label{fig:svd_grad_proj_rx2_n100_3D_r6_Omega_4_10+4_gradR2_10_60it}
\end{figure}

In a last experiment, another advantage of the proposed method is illustrated. For this experiment noise with $\eta = 10$ is added to the low-rank tensor as follows:
\begin{equation} \label{eq:A_noise}
\begin{split}
A_{\eta} = A + \eta \mlin{ randn} \left( n_1, n_2, n_3 \right).
\end{split}
\end{equation}
This means that the noise has the same magnitude as $A$ but the estimated rank of $\left( \nabla f_{\Omega }(X) \cdot X_3''^\top \right)^\mathrm{L}$ still equals $r_1'-r_1=4$ after 120 iterations of the CG algorithm, as shown in \Cref{fig:svd_grad_proj_rx2_n100_3D_r6_Omega_gradR2_100_20it_Omega_8_10+4_noise_10+0}. 
The squared norm of the Riemannian gradient equals 0.9.

\begin{figure}
\centering
\includegraphics[width= \linewidth]{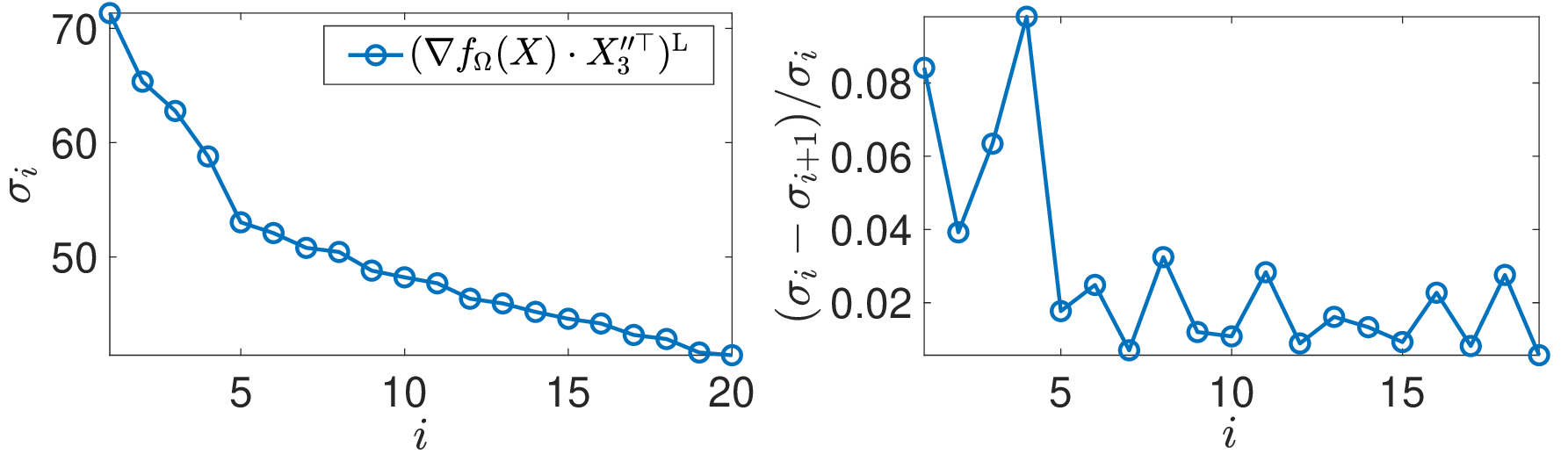}
\caption{The first 20 singular values of $\left( \nabla f_{\Omega }(X) \cdot X_3''^\top \right)^\mathrm{L}$ (left) and their relative gap (right), for $\lVert \mathcal{P}_{T_X \mathbb{R}_{(r_1,r_2)}^{n_1 \times n_2 \times n_3}} \nabla f_{\Omega}(X) \rVert^2 = 0.9$, obtained after 120 iterations, and with noise added to the data as in \eqref{eq:A_noise} with $\eta = 10 $.}
\label{fig:svd_grad_proj_rx2_n100_3D_r6_Omega_gradR2_100_20it_Omega_8_10+4_noise_10+0}
\end{figure}

\section{Conclusion}
The two equalities given in \Cref{prop:rank_est_best_low_rank_approx} enable to compute the TT-rank of $A$ based on a stationary point of LRTAP on the fixed-rank manifold, i.e., $\min_{X \in \mathbb{R}_{(r_1, r_2)}^{n_1 \times n_2 \times n_3}} \frac{1}{2}\|X-A\|^2$, which can be obtained using classic Riemannian optimization. Moreover, numerical experiments indicate that, for LRTCP \eqref{eq:min_completion_TT}, using these equalities with the rank replaced by the estimated rank \eqref{eq:num_rank_increase} provides a plausible estimation of the TT-rank of $A$ which can be used as an adequate value for $(k_1, k_2)$.

We are working on a Riemannian rank-adaptive method using this rank estimation method on the LRTCP and additionally on an extension of this method to higher dimensions. 

\section*{Acknowledgment}
This work was supported by the Fonds de la Recherche Scientifique -- FNRS and the Fonds Wetenschappelijk Onderzoek -- Vlaanderen under EOS Project no 30468160, and by the Fonds de la Recherche Scientifique -- FNRS under Grant no T.0001.23. 

\bibliographystyle{ieeetr}
\bibliography{references}

\end{document}